\theoremstyle{thmstyleone}%
\newtheorem{theorem}{Theorem}
\newtheorem{proposition}[theorem]{Proposition}%
\begin{document}

\title[A counterexample to the Karvatskyi--Pratsiovytyi conjecture]{A counterexample to the Karvatskyi--Pratsiovytyi conjecture concerning the achievement set of an intermediate series}

\author{Mykola Moroz}
\address{\emph{M.Moroz}: Department of Dynamical Systems and Fractal Analysis, Institute of Mathematics of NAS of Ukraine, Tereschenkivska 3, 01024 Kyiv, Ukraine}

\begin{abstract}
We found a counterexample to the conjecture of Karvatskyi and Pratsiovytyi concerning the topological type of the achievement set of an intermediate series (Proceedings of the International Geometry Center, 2023. \url{https://doi.org/10.15673/pigc.v16i3.2519}). This conjecture is based on an analogy with the squeeze theorem from calculus. We also proposed an improved version of the conjecture, which this counterexample does not refute.
\end{abstract}

\keywords{achievement set, set of subsums, Cantor-type set, cantorval, counterexample.}

\subjclass[2020]{40A05, 28A80, 11K31, 11B05}

\maketitle

\section{Introduction}\label{sec1}
The \emph{achievement set} of a positive and absolutely summable sequence $(u_n)_{n=1}^\infty$ is defined as the set
$$E(u_n)=\left\{\sum_{n=1}^\infty \varepsilon_n u_n,~\varepsilon_n\in\{0,1\}\right\}=\left\{\sum_{n\in A}^\infty u_n,~A\subset\mathbb{N}\right\}.$$
It is well known \cite{GN1988,NS2000} that the achievement set $E(u_n)$ is one of the following three types:
\begin{itemize}
\item a finite union of closed bounded intervals;
\item a Cantor-type set (homeomorphic to the Cantor set, which is the achievement set of the sequence $(u_n)_{n=1}^\infty$, with $u_{n}=\frac{2}{3^n}$);
\item a Cantorval (homeomorphic to the Guthrie--Nymann achievement set, which is the achievement set of the sequence $(u_n)_{n=1}^\infty$, with $u_{2k}=\frac{2}{4^k}$ and $u_{2k-1}=\frac{3}{4^k}$).
\end{itemize}

It is often difficult to determine the topological type of the achievement set of a given sequence. If both inequalities $u_n>r_n^u=\sum_{i=n+1}^\infty u_i$ and $u_n\leq r_n^u$ hold infinitely often, this problem is non-trivial even for relatively simple sequences (e.g., multigeometric sequences \cite{BBSzF2015,BPr-W2017,BFSz2014}).

The conditions under which topological types of the achievement sets coincide could help partially solve this problem. In \cite{PrK2023}, Pratsiovytyi and Karvatskyi expressed a conjecture which is based on an analogy with the squeeze theorem from calculus.

\textbf{The Karvatskyi--Pratsiovytyi conjecture \cite{PrK2023}.} 
\textit{Let $\sum_{n=1}^{\infty} a_n$ and $\sum_{n=1}^{\infty} b_n$ be two convergent positive series with the same topological type of achievement sets of the sequences $(a_n)_{n=1}^\infty$ and $(b_n)_{n=1}^\infty$. If the sequences $(a_n)_{n=1}^\infty$, $(b_n)_{n=1}^\infty$, and $(c_n)_{n=1}^\infty$ satisfy the conditions
	\begin{equation}\label{1}
	a_n \leq c_n \leq b_n\text{~~~~~~and~~~~~~}	\left[
		\begin{array}{l}
			\displaystyle b_n \leq r^{a}_n=\sum_{i=n+1}^{\infty}{a_i}, \\
			\displaystyle r^{b}_n=\sum_{i=n+1}^{\infty}{b_i} < a_n \\
		\end{array}
		\right.
	\end{equation}
for all $n \in\mathbb{N}$, then $E(a_n)$, $E(b_n)$, and $E(c_n)$ have the same topological type.}

The conditions \eqref{1} are sufficiently strong  and ensure the following properties:
\begin{itemize}
\item for the sequences $(a_n)_{n=1}^\infty$, $(b_n)_{n=1}^\infty$, and $(c_n)_{n=1}^\infty$, the inequalities of the form $u_n\leq r_n^u$ or $r_n^u<u_n$ hold for the same values of $n$.
\item the difference $b_n-a_n$ is bounded from above by $\max\left\{\left|a_n-r_n^a\right|,\left|b_n-r_n^b\right|\right\}$, because $a_n \leq b_n\leq r_n^a$ or $r_n^b<a_n \leq b_n$ for all $n$. This property is slightly stronger than boundedness of the difference $b_n-a_n$, which follows from the convergence of both series $\sum_{n=1}^{\infty} a_n$ and $\sum_{n=1}^{\infty} b_n$.
\end{itemize}
 
However, these conditions are insufficient.

\section{Counterexample}\label{sec2}

Consider the following multigeometric sequences $(a_n)_{n=1}^\infty$, $(b_n)_{n=1}^\infty$, and $(c_n)_{n=1}^\infty$:
	\begin{align*}
	a_{2k-1}=&a_{2k}=\frac{\alpha}{4^k}, ~~\alpha=1.95;\\
	b_{2k-1}=&\frac{4\beta}{4^k},~b_{2k}=\frac{3\beta}{4^k},~~\beta=0.8; \\
	c_{2k-1}=&\frac{3}{4^k},~c_{2k}=\frac{2}{4^k}.
	\end{align*}

\begin{proposition}
	The sequences $(a_n)_{n=1}^\infty$, $(b_n)_{n=1}^\infty$, and $(c_n)_{n=1}^\infty$ satisfy the conditions \eqref{1} for all $n \in\mathbb{N}$.
\end{proposition}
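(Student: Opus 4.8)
The plan is to reduce everything to explicit closed forms for the tails $r_n^a$ and $r_n^b$, since both $(a_n)_{n=1}^\infty$ and $(b_n)_{n=1}^\infty$ are piecewise geometric with ratio $1/4$ on their even and odd subsequences. First I would fix an index by its parity and sum the resulting geometric tail. For $(a_n)_{n=1}^\infty$, each block $k\geq m+1$ contributes $2\alpha/4^k$, so $r_{2m}^a=\frac{2\alpha}{3\cdot 4^m}$ and hence $r_{2m-1}^a=a_{2m}+r_{2m}^a=\frac{5\alpha}{3\cdot 4^m}$. Analogously, each block of $(b_n)_{n=1}^\infty$ contributes $(4\beta+3\beta)/4^k$, giving $r_{2m}^b=\frac{7\beta}{3\cdot 4^m}$ and $r_{2m-1}^b=b_{2m}+r_{2m}^b=\frac{16\beta}{3\cdot 4^m}$. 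These four one-line geometric-series evaluations are the only genuine computations in the argument.

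With the closed forms in hand, the verification of \eqref{1} splits cleanly into the sandwich inequality $a_n\leq c_n\leq b_n$ and the bracketed disjunction, each treated by parity after cancelling the common factor $4^{-m}$. For the sandwich I would simply compare numerators: on odd indices the claim reads $\alpha\leq 3\leq 4\beta$, and on even indices $\alpha\leq 2\leq 3\beta$, both of which hold for $\alpha=1.95$ and $\beta=0.8$. For the disjunction I would show that the branch that is satisfied depends only on the parity of $n$: on odd indices the first branch $b_n\leq r_n^a$ holds, while on even indices the second branch $r_n^b<a_n$ holds. Cancelling $4^{-m}$ reduces the odd case to $4\beta\leq \frac{5\alpha}{3}$, i.e.\ $12\beta\leq 5\alpha$, and the even case to $\frac{7\beta}{3}<\alpha$, i.e.\ $7\beta<3\alpha$.

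The main (and essentially the only) obstacle is that these two critical inequalities are genuinely tight for the chosen parameters: $12\beta=9.6\leq 9.75=5\alpha$ and $7\beta=5.6<5.85=3\alpha$. The real content of the proposition is therefore not the computation itself but the observation that $\alpha=1.95$ and $\beta=0.8$ were selected precisely to lie inside this narrow feasible window. For this reason I would display the two numerical comparisons explicitly rather than absorb them into the algebra, since they are exactly what makes the later counterexample possible. Assembling the parity cases then yields \eqref{1} for every $n\in\mathbb{N}$, completing the proof.
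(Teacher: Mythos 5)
Your proposal is correct and follows essentially the same route as the paper: split by parity, evaluate the geometric tails $r_{2m}^a$, $r_{2m-1}^a$, $r_{2m}^b$ in closed form, and check that the first branch of the disjunction holds at odd indices and the second at even indices. You merely make explicit the numerical comparisons ($12\beta=9.6\leq 9.75=5\alpha$ and $7\beta=5.6<5.85=3\alpha$) that the paper leaves implicit, and you correctly read the even-index condition as $r_n^b<a_n$.
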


\begin{proof}
It is obvious that $a_n \leq c_n \leq b_n$ for all $n\in\mathbb{N}$. 

If $n=2k-1$, then $b_n=b_{2k-1}=\frac{4\beta}{4^k}$ and $r_n^{a}=r_{2k-1}^{a}=\frac{5\alpha}{3\cdot 4^k}$. In this case, $b_n \leq r_n^{a}$.

If $n=2k$, then $a_n=a_{2k}=\frac{\alpha}{4^k}$ and $r_n^{b}=r_{2k}^{b}=\frac{7\beta}{3\cdot 4^k}$. In this case, $r_n^{b}<b_n$.
\end{proof}

It is known \cite{BBSzF2015} that the achievement set of multigeometric sequence $$\left(k_0,k_1,\ldots,k_m,k_0q,k_1q,\ldots,k_mq,k_0q^2,k_1q^2,\ldots,k_mq^2,\ldots\right)$$ is self-similar set of the form
\begin{equation*}
	K(\Sigma;q)=\left\{\sum_{n=0}^\infty d_nq^n\colon d_n\in\Sigma\right\},
\end{equation*}
where $\Sigma =\left\{\sum_{n=0}^{m}k_{n}\varepsilon_{n}\colon\varepsilon_{n}\in \{0,1\}\right\}$.

Consequently
\begin{align*}
E(a_n)&=	K\left(\Sigma(a_n);\frac{1}{4}\right),~\text{where}~\Sigma(a_n)=\left\{0,\frac{\alpha}{4},\frac{\alpha}{2}\right\},\\
E(b_n)&=K\left(\Sigma(b_n);\frac{1}{4}\right),~\text{where}~\Sigma(b_n)=\left\{0,\frac{3\beta}{4},\beta,\frac{7\beta}{4}\right\},\\
E(c_n)&=K\left(\Sigma(c_n);\frac{1}{4}\right),~\text{where}~\Sigma(c_n)=\left\{0,\frac{2}{4},\frac{3}{4},\frac{5}{4}\right\}.
\end{align*}

In \cite[Theorem 1.3(e)]{BBSzF2015} proved that $K(\Sigma;q)$ is a Cantor-type set of zero Lebesgue measure if $q<\frac1{|\Sigma|}$ or, more generally, if $q^n<\frac1{|\Sigma_n|}$ for some
$n\in\mathbb{N}$, where $\Sigma_n=\left\{\sum_{k=0}^{n-1}d_kq^k\colon d_k\in\Sigma\right\}$.


\begin{proposition}
The achievement set $E(a_n)$ is the Cantor-type set.
\end{proposition}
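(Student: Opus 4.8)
The plan is to apply the criterion quoted just above from \cite[Theorem 1.3(e)]{BBSzF2015}, since essentially all the structural work has already been done. The excerpt has identified $E(a_n)$ with the self-similar set $K\left(\Sigma(a_n);\frac14\right)$, where $\Sigma(a_n)=\left\{0,\frac{\alpha}{4},\frac{\alpha}{2}\right\}$, so what remains is to check a single arithmetic inequality relating $q$ and the cardinality $|\Sigma(a_n)|$.

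First I would confirm that the three listed elements of $\Sigma(a_n)$ are genuinely distinct, so that $|\Sigma(a_n)|=3$. Since $\alpha=1.95>0$, the values $0$, $\frac{\alpha}{4}$, and $\frac{\alpha}{2}$ are pairwise different, giving $|\Sigma(a_n)|=3$. Next I would verify the hypothesis $q<\frac{1}{|\Sigma(a_n)|}$: here $q=\frac14$ while $\frac{1}{|\Sigma(a_n)|}=\frac13$, and indeed $\frac14<\frac13$. Consequently the cited theorem applies directly and yields that $K\left(\Sigma(a_n);\frac14\right)$, that is $E(a_n)$, is a Cantor-type set of zero Lebesgue measure.

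I do not anticipate any real obstacle here. The statement is an immediate consequence of the self-similarity description of $E(a_n)$ together with the measure-zero/Cantor-type criterion; the only point requiring minor care is that the crude bound $q<\frac{1}{|\Sigma|}$ already suffices, so one need not invoke the more delicate iterated estimate involving $\Sigma_n$ offered by the general form of the theorem. If one wished to be self-contained rather than citing the result, the underlying reason is that each of the $|\Sigma(a_n)|=3$ maps $x\mapsto s+\frac14 x$ ($s\in\Sigma(a_n)$) contracts the convex hull of $E(a_n)$ by the factor $\frac14$, so the three images have total length $3\cdot\frac14=\frac34<1$ of the whole; they therefore cannot cover the hull and must leave gaps, forcing $E(a_n)$ to be totally disconnected and perfect, hence homeomorphic to the Cantor set.
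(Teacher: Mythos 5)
Your proof is correct and follows the same route as the paper: both identify $E(a_n)$ with $K\left(\Sigma(a_n);\frac14\right)$, note that $\left|\Sigma(a_n)\right|=3$, and apply Theorem 1.3(e) of the cited reference via the simple bound $q=\frac14<\frac13$. The extra self-contained sketch at the end is a reasonable heuristic but is not needed and is not part of the paper's argument.
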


This proposition follows from Theorem 1.3(e) in \cite{BBSzF2015}, since $\left|\Sigma(a_n)\right|=3$.

\begin{proposition}
The achievement set $E(b_n)$ is the Cantor-type set.
\end{proposition}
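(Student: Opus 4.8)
The plan is to invoke Theorem 1.3(e) from \cite{BBSzF2015}, but I note at the outset that the naive half of that criterion is \emph{unavailable} here. Since $\Sigma(b_n)=\left\{0,\frac{3\beta}{4},\beta,\frac{7\beta}{4}\right\}$ has exactly four elements and $q=\frac14$, we have $q=\frac{1}{|\Sigma(b_n)|}$ with equality, not the strict inequality $q<\frac{1}{|\Sigma(b_n)|}$. So the crux of the argument must be to pass to the refined condition $q^n<\frac{1}{|\Sigma_n|}$ for a suitable $n$, which can hold strictly exactly when the iterated sums $\sum_{k=0}^{n-1}d_kq^k$ collapse, i.e. when $|\Sigma_n|<|\Sigma(b_n)|^n$.

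First I would take $n=2$ and analyze $\Sigma_2=\{d_0+d_1q\colon d_0,d_1\in\Sigma(b_n)\}$. Factoring out the common scale, write $\Sigma(b_n)=\frac{\beta}{4}\{0,3,4,7\}$; then $\Sigma_2=\frac{\beta}{16}\{4a+b\colon a,b\in\{0,3,4,7\}\}$, so that computing $|\Sigma_2|$ reduces to counting the distinct integers $4a+b$. Because scaling by a positive constant is a bijection, this reduction is harmless and leaves the cardinality unchanged.

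The central step, and essentially the only one with any content, is to exhibit a genuine collision among these integers. Listing the four blocks $4a+\{0,3,4,7\}$ for $a\in\{0,3,4,7\}$ produces $\{0,3,4,7\}$, $\{12,15,16,19\}$, $\{16,19,20,23\}$, $\{28,31,32,35\}$, and one reads off the overlaps $16=4\cdot3+4=4\cdot4+0$ and $19=4\cdot3+7=4\cdot4+3$. These come from the relation $4\cdot4+b=4\cdot3+(b+4)$, valid precisely when both $b$ and $b+4$ lie in $\{0,3,4,7\}$, which happens only for $b=0$ and $b=3$. Hence there are exactly two collisions and $|\Sigma_2|=16-2=14$.

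With $|\Sigma_2|=14$ in hand the conclusion is immediate: $q^2=\frac{1}{16}<\frac{1}{14}=\frac{1}{|\Sigma_2|}$, so Theorem 1.3(e) of \cite{BBSzF2015} applies with $n=2$ and yields that $E(b_n)=K\left(\Sigma(b_n);\frac14\right)$ is a Cantor-type set of zero Lebesgue measure. The one point demanding care is the verification that $|\Sigma_2|=14$ rather than $16$, equivalently that the two listed collisions are genuine and that no further coincidences occur; since the entire argument rests on the strict inequality $|\Sigma_2|<4^2$, this count is the main (if modest) obstacle, while everything else is bookkeeping.
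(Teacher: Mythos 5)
Your proof is correct and takes essentially the same route as the paper: both compute $\Sigma_2(b_n)$, verify that $\left|\Sigma_2(b_n)\right|=14$ so that $\left(\frac14\right)^2=\frac1{16}<\frac1{14}$, and invoke Theorem~1.3(e) of \cite{BBSzF2015} with $n=2$. Your explicit identification of the two collisions ($16=4\cdot3+4=4\cdot4+0$ and $19=4\cdot3+7=4\cdot4+3$ in the rescaled integer model) merely makes transparent the count that the paper justifies by listing the fourteen elements of $\Sigma_2(b_n)$ outright.
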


Since $\Sigma_2(b_n)=\left\{0,\frac{3\beta}{16},\frac{\beta}{4},\frac{7\beta}{16},\frac{3\beta}{4},\frac{15\beta}{16},\beta,\frac{19\beta}{16},\frac{5\beta}{4},\frac{23\beta}{16},\frac{7\beta}{4},\frac{31\beta}{16},2\beta,\frac{35\beta}{16}\right\}$, 
$\left|\Sigma_2(b_n)\right|=14$, and $\left(\frac{1}{4}\right)^2<\frac{1}{\left|\Sigma_2(b_n)\right|}$, therefore, this proposition follows from Theorem~1.3(e) in \cite{BBSzF2015}.

However, the achievement set $E(b_n)$ is Cantorval (the Guthrie--Nymann achievement set \cite{GN1988}). Therefore, the Karvatskyi--Pratsiovytyi conjecture is false.

\section{The improved version of the Karvatskyi--Pratsiovytyi conjecture}\label{sec11}

Under the conditions of the Karvatskyi--Pratsiovytyi conjecture, it is possible that $\lim_{n\to\infty}\frac{b_n}{a_n}\not=1$. This played a crucial role in constructing the counterexample to the conjecture. Therefore, we suggest adding a condition $\lim_{n\to\infty}\frac{b_n}{a_n}\not=1$ that eliminates this disadvantage. So we get

\textbf{The improved Karvatskyi--Pratsiovytyi conjecture}
\textit{Let $\sum_{n=1}^{\infty} a_n$ and $\sum_{n=1}^{\infty} b_n$ be two convergent positive series with the same topological type of achievement sets of the sequences $(a_n)_{n=1}^\infty$ and $(b_n)_{n=1}^\infty$. If the sequences $(a_n)_{n=1}^\infty$, $(b_n)_{n=1}^\infty$, and $(c_n)_{n=1}^\infty$ satisfy the conditions
	\begin{equation}\label{2}
		\lim_{n\to\infty}\frac{b_n}{a_n}=1,~~~a_n \leq c_n \leq b_n,\text{~~~~and~~~~}	\left[
		\begin{array}{l}
			\displaystyle b_n \leq r^{a}_n=\sum_{i=n+1}^{\infty}{a_i}, \\
			\displaystyle r^{b}_n=\sum_{i=n+1}^{\infty}{b_i} < a_n \\
		\end{array}
		\right.
	\end{equation}
	for all $n \in\mathbb{N}$, then $E(a_n)$, $E(b_n)$, and $E(c_n)$ have the same topological type.}

\section*{Acknowledgements}

\end{document}